\newtheorem*{n_def}{Definition}
\newtheorem*{n_prop}{Corollary}
\newtheorem{prop}{Proposition}
\newtheorem*{theorem}{Theorem}
\title{On the classification of irrational numbers}
\author{José de Jes\'us Hern\'andez Serda}
\date{May 2015}
\begin{document}

 \maketitle
 
 \begin{abstract}
  In this note we make a comparison between the arithmetic properties of irrational numbers and their dynamical properties under the Gauss map. 
  We show some equivalences between different classifications of irrational numbers such as the Diophantine classes and numbers admitting approximations by rational numbers at a given \emph{speed}.
  We also show that irrational numbers with finite \emph{upper} Lyapunov exponent for the Gauss map satisfy a Diophantine condition.
 \end{abstract}

 Let $\alpha$ be an irrational number. An approximation to $\alpha$ is a sequence of rational numbers converging to $\alpha$. We recall the notions of the speed of an approximation and the distance of a given number to the rational numbers. 
 
 \begin{n_def}
  Let $\psi: \mathbb{N} \rightarrow \mathbb{R}^{+}$ be a function. We say an irrational number $\alpha$ is approximated at \emph{speed} $\psi$ if and only if there exists infinitely many rational numbers $p/q$ such that \[ \left| \alpha - \frac{p}{q}\right| \leq \psi(q).\]
 \end{n_def}

 We have the following classification: We say an irrational number $\alpha$ is, 
 \begin{itemize}
  \item \emph{slow} if and only if there exists $\varepsilon > 0$ such that $\alpha$ is not approximated at speed $\psi(q) = \varepsilon q^{-2}$,
  \item \emph{fast} if and only if there exists $\gamma > 0$ such that $\alpha$ is approximated at speed $\psi(q) = q^{-(2+\gamma)}$ and
  \item \emph{super fast} if and only if $\alpha$ is approximated at speed $\psi(q) = q^{-(2+\gamma)}$ for all $\gamma > 0$.
 \end{itemize}
 
 The arithmetic property of our interest is the speed of the approximations that an irrational number admits.
 If a number $\alpha$ is approximated at speed $\psi_0$ it is also approximated at speed $\psi$ for any $ \psi \geq \psi_0$.
 Given an irrational number $\alpha$ we can compute the function $\phi_{0}(q) = \min_{p \in  \mathbb{Z}} \lbrace |\alpha - p/q|\rbrace$.
 Notice that $\alpha$ may be approximated at some speed $\psi \leq \phi_0$ as long as we have $\psi(q) = \phi_0(q)$ for infinitely many $q$, but $\alpha$ cannot be approximated at speed $\psi$ for any $\psi < \phi_0$.
  
 \begin{n_def}
  Let $\varphi: \mathbb{N} \rightarrow \mathbb{R}^{+}$ be a function. We say an irrational number $\alpha$ is at \emph{distance} $\varphi$ of the rational numbers if and only if for all rational numbers $p/q$ we have \[ \varphi(q) < \left| \alpha - \frac{p}{q} \right|.\]
 \end{n_def}

 With this definition, a number $\alpha$ will be at distance $\varphi$ of the rational numbers for any $\varphi < \phi_0$. 
 We state the usual Diophantine condition in terms of \emph{distance} as follows. We say an irrational number $\alpha$ is, 
 \begin{itemize}
  \item \emph{badly approximable} if and only if there exists $\varepsilon > 0$ such that $\alpha$ is at distance $\varphi(q) = \varepsilon q^{-2}$ of the rational numbers,
  \item \emph{Diophantine of class} $D(\gamma, \varepsilon)$ if and only if there exist $\varepsilon > 0$ and $\gamma \geq 0$ such that $\alpha$ is at distance $\varphi(q) = \varepsilon q^{-(2+\gamma)}$ of the rational numbers.
 \end{itemize}
 The sets
 \[ D(\gamma) = \bigcup_{\varepsilon > 0} D(\gamma, \varepsilon) ,\hspace*{1em} D(+) = \bigcap_{\gamma > 0} D(\gamma) \hspace*{1em}\text{and}\hspace*{1em} D(\infty) = \bigcup_{\gamma > 0} D(\gamma),\]
 are the \emph{Diophantine numbers of class $D(\gamma)$}, the \emph{Diophantine numbers of all classes} and the \emph{Diophantine numbers}, respectively.
 Under these definitions, the class of badly approximable numbers is exactly the Diophantine class $D(0)$.
 For any $0 < \gamma_1 < \gamma_2$ we have that the Diophantine classes satisfy $D(0) \subset D(+) \subset D(\gamma_1) \subset D(\gamma_2) \subset D(\infty)$. 
 All these contentions are strict.
 For example, on \cite{Bugeaud}, Bugeaud shows that there is a Cantor set of numbers of class $D(\gamma_2)$ which are not of class $D(\gamma_1)$ for any $0 < \gamma_1 < \gamma_2$.
 The classifications by speed of approximation and Diophantine condition are complementary in the following sense:
 
 \begin{prop}
 \label{equivalence_diophantine_speed}
  An irrational number is of class $D(+)$ if and only if it is not a fast number. 
  Also, an irrational number is of class $D(\infty)$ if and only if it is not a super fast number.  
 \end{prop}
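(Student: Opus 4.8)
The plan is to unwind every definition into a statement quantified over the exponent $\gamma$, isolate one elementary lemma relating ``many good rational approximations'' to ``a uniform lower bound'', and then run the same two--ingredient argument twice, once with a universal and once with an existential quantifier on $\gamma$. Concretely, $\alpha\in D(\gamma)$ unwinds to: there is $\varepsilon>0$ with $|\alpha-p/q|>\varepsilon q^{-(2+\gamma)}$ for every rational $p/q$; so $\alpha\in D(+)$ says this holds for all $\gamma>0$ and $\alpha\in D(\infty)$ says it holds for some $\gamma>0$. Dually, $\alpha$ is \emph{fast} iff the set $A_\gamma:=\{p/q:|\alpha-p/q|\le q^{-(2+\gamma)}\}$ is infinite for some $\gamma>0$, so \emph{not fast} means $A_\gamma$ is finite for every $\gamma>0$, and \emph{not super fast} means $A_\gamma$ is finite for some $\gamma>0$. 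Thus the first assertion becomes ``[for all $\gamma$: a uniform bound exists] $\iff$ [for all $\gamma$: $A_\gamma$ is finite]'' and the second the same with ``for all'' replaced by ``there exists''.

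The lemma I would prove has two halves. (i) If $|\alpha-p/q|>\varepsilon q^{-(2+\gamma')}$ for all $p/q$ and $\gamma''>\gamma'$, then $A_{\gamma''}$ is finite: any $p/q\in A_{\gamma''}$ forces $\varepsilon q^{-(2+\gamma')}<q^{-(2+\gamma'')}$, i.e. $q^{\gamma''-\gamma'}<\varepsilon^{-1}$, so $q$ is bounded, and each of the finitely many admissible denominators carries only finitely many numerators with $|\alpha-p/q|\le 1$. (ii) If $A_\gamma$ is finite, then $|\alpha-p/q|>\varepsilon q^{-(2+\gamma)}$ for all $p/q$ for a suitable $\varepsilon>0$: off $A_\gamma$ this holds with $\varepsilon=1$ by the very definition of $A_\gamma$, and on the finite set $A_\gamma$ the positive numbers $|\alpha-p/q|\,q^{2+\gamma}$ have a positive minimum (irrationality of $\alpha$ is used here, so no term vanishes), so any $\varepsilon$ below that minimum and below $1$ works.

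Given the lemma, the four implications are short. For $D(+)\Rightarrow$ not fast: fix an arbitrary $\gamma>0$; since $\alpha\in D(\gamma/2)$ there is a uniform bound at exponent $2+\gamma/2$, and (i) with $\gamma'=\gamma/2<\gamma=\gamma''$ shows $A_\gamma$ is finite — as $\gamma$ was arbitrary, $\alpha$ is not fast. For $D(\infty)\Rightarrow$ not super fast: take the one $\gamma>0$ afforded by $\alpha\in D(\infty)$ with a uniform bound at exponent $2+\gamma$, and apply (i) with $\gamma''=2\gamma$ to see $A_{2\gamma}$ is finite, whence $\alpha$ is not super fast. Conversely, ``not fast'' supplies, via (ii), a uniform bound at exponent $2+\gamma$ for every $\gamma>0$, i.e. $\alpha\in D(\gamma)$ for all $\gamma$, i.e. $\alpha\in D(+)$; and ``not super fast'' supplies the same at a single $\gamma$, so $\alpha\in D(\gamma)\subseteq D(\infty)$.

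The only delicate point — the ``obstacle'' — is that one must perturb the exponent rather than match it. A uniform bound $|\alpha-p/q|>\varepsilon q^{-(2+\gamma)}$ with $\varepsilon<1$ does \emph{not} by itself rule out $A_\gamma$ being infinite, so in passing from $D(+)$ to ``not fast'' one is forced to feed the argument through $D(\gamma/2)$, and in passing from $D(\infty)$ to ``not super fast'' one must enlarge $\gamma$ to $2\gamma$; the constant $\varepsilon$ is precisely what gets absorbed by this change of exponent. Keeping track of the direction of the perturbation in each implication, together with the routine observation that a fixed denominator contributes only finitely many numerators, is all that needs care; the remainder is bookkeeping with the $\gamma$--quantifiers.
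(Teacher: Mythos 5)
Your proof is correct and complete, and it is in fact more careful than the paper's own argument. The paper reduces both equivalences to a single claimed dichotomy: for fixed $\gamma>0$, \emph{exactly one} of (A) ``infinitely many $p/q$ satisfy $|\alpha-p/q|\le q^{-(2+\gamma)}$'' and (B) ``there exists $\varepsilon>0$ with $\varepsilon q^{-(2+\gamma)}<|\alpha-p/q|$ for all $p/q$'' holds; it then proves only that not-(A) implies (B), which is your part (ii) verbatim (a positive minimum over the finite exceptional set, using irrationality). The exclusivity ``but not both'' asserted there is actually false for a fixed $\gamma$: a number built with $a_{n+1}\approx q_n^{\gamma}$ has $|\alpha-p_n/q_n|\asymp q_n^{-(2+\gamma)}$, so (A) holds while (B) also holds with a small $\varepsilon$; logically, a bound (B) with $\varepsilon<1$ simply does not contradict (A). Consequently the implications from $D(+)$ to ``not fast'' and from $D(\infty)$ to ``not super fast'' do not follow from the paper's argument as written; they require exactly the exponent perturbation you isolate as the delicate point, namely your part (i), which deduces finiteness of $A_{\gamma''}$ from a uniform bound at a strictly smaller exponent $\gamma'<\gamma''$, the constant $\varepsilon$ being absorbed into the resulting bound $q^{\gamma''-\gamma'}<\varepsilon^{-1}$. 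In short: your (ii) coincides with the paper's proof, and your (i) supplies the half that the paper leaves unjustified.
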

 \begin{proof}
  Both affirmations are implied by the following: 
  Given $\gamma > 0$, either infinitely many rational numbers satisfy $|\alpha - p/q| \leq q^{-(2+\gamma)}$ or there exists $\varepsilon > 0$ such that all rational numbers satisfy $\varepsilon q^{-(2+\gamma)} < |\alpha - p/q|$, but not both. 
  
  Suppose there is only finitely many rational numbers such $|\alpha - p_i/q_i| \leq q_i^{-(2+\gamma)}$, for each of these numbers we can choose $\varepsilon_i > 0$ such that $\varepsilon_i q_i^{-(2+\gamma)} < |\alpha - p_i/q_i|$.
  Taking $\varepsilon = \min \varepsilon_i$ we have $\varepsilon q^{-(2+\gamma)} < |\alpha - p/q|$ for all rational numbers.  
 \end{proof}

 Liouville's theorem states that algebraic numbers are all of class $D(\infty)$, in particular, that algebraic numbers of degree $d$ are of class $D(d)$. 
 (See, for example \cite{Khinchin}, Theorem 27.)
 The first known examples of transcendental numbers were also given by Liouville, who proved that certain numbers, such as $\sum_{k=0}^{\infty} 10^{-k!}$, are not of class $D(\infty)$. 
 For this reason super fast numbers are also called Liouville numbers. 
 However, there are transcendental numbers of class $D(\infty)$; in his note \cite{Mahler}, Mahler showed that the number $\pi$ is of class $D(42)$.
 Roth \cite{Roth} improved Liouville's theorem showing that algebraic numbers are of class $D(+)$. An example of a transcendental number of class $D(+)$ is Euler's number $e$.
 \[ \ast \ast \ast \]
 
 For the proof of the properties of the continued fraction expansion that we mention here we refer to Khinchin's monograph \cite{Khinchin}. Irrational numbers have an unique continued fraction expansion: \[\alpha = [a_0 : a_1, a_2, ...] = a_0 +\cfrac{1}{a_1 + \cfrac{1}{a_2 + \cfrac{1}{\ddots}}},\] where $a_0 \in \mathbb{Z}$ and $a_n \in \mathbb{N}$ for $n \geq 1$.
 The continued fraction expansion of an irrational number is infinite, whereas rational numbers have finite but not unique continued fraction expansions. Given an irrational number $\alpha = [a_0: a_1, a_2, ...]$ the $n$-th convergent is the rational number $p_n / q_n = [a_0: a_1, ..., a_n]$. The sequence $\lbrace p_n / q_n \rbrace$ converges to $\alpha$ and this sequence is the \emph{best} approximation to $\alpha$: for any other rational number $a/b$ such that $b \leq q_n$ we have \[\left| \alpha - \frac{p_n}{q_n} \right| < \left| \alpha - \frac{a}{b} \right|.\] 
 
 The sequences $p_n$ and $q_n$ follow the recurrence relations $p_n = a_n p_{n-1} + p_{n-2}$ and $q_n = a_n q_{n-1} + q_{n-2}$; under the convention $p_{-1} = 1$, $q_{-1} = 0$ they are valid for $n \geq 1$.
 These relations yield the invariant $q_n p_{n-1} - p_n q_{n-1} = (-1)^{n}$, which in particular shows that convergents are all in lowest terms. 
 For $n \geq 1$ we also have \[ \frac{1}{2 q_{n+1}^2} < \frac{1}{q_n (q_n + q_{n+1})} \leq \left| \alpha - \frac{p_n}{q_n} \right| \leq \frac{1}{q_n q_{n+1}} < \frac{1}{q_n^2},\] which shows that all irrational numbers are approximated at speed $\psi(q) = q^{-2}$, so no irrational number is at any distance $\varphi(q) > q^{-2}$.
 
 In order to distinguish different classes of numbers by the speed of approximation we must consider functions $\psi(q) \leq q^{-2}$. 
 The existence of irrational numbers approximated at a given speed is guaranteed by the continued fraction expansion. Let $\psi(q) \leq q^{-2}$ be a positive function and $r = [a_0: a_1, ..., a_k]$ a rational number. 
 Consider the sequence $\lbrace a_n \rbrace$ beginning with the coefficients of the continued fraction of $r$ and choosing $a_{n+1} \geq (q_n^2 \psi(q_n) )^{-1}$ for $n \geq k$. 
 The irrational number $\alpha = [a_0: a_1, a_2, ...]$ satisfies \[ \left| \alpha - \frac{p_n}{q_n} \right| < \frac{1}{q_n q_{n+1}} = \frac{1}{q_n (a_{n+1} q_n + q_{n-1})} \leq \frac{1}{a_{n+1} q_n^2} < \psi(q_n).\]
 Since there are infinitely many options for choosing $a_{n+1}$ at each step, there are uncountably many irrational numbers approximated at a given speed.
 
 We can classify irrational numbers by the properties of the continued fraction expansion, for example, an irrational number $\alpha = [a_0 : a_1, a_2, ...]$ is of \emph{bounded type} if there exists $M > 0$ such that $a_n \leq M$ for all $n$. For this class of numbers we have the well known characterization:
 
 \begin{prop}
 \label{bounded_slow_bad}
  For an irrational number $\alpha$ the following are equivalent:
  a) $\alpha$ is of bounded type,
  b) $\alpha$ is slow,
  c) $\alpha$ is badly approximable.
 \end{prop}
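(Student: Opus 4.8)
The plan is to establish the cycle of implications (a)$\Rightarrow$(c)$\Rightarrow$(b)$\Rightarrow$(a), after which all three conditions are equivalent. The implication (c)$\Rightarrow$(b) is immediate from the definitions: if $\alpha$ is badly approximable with constant $\varepsilon$, then \emph{no} rational $p/q$ satisfies $|\alpha-p/q|\le\varepsilon q^{-2}$, so in particular there are not infinitely many such rationals, and $\alpha$ is slow with the same $\varepsilon$.

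For (b)$\Rightarrow$(a) I would argue contrapositively. Writing $\alpha=[a_0:a_1,a_2,\dots]$, if $\alpha$ is not of bounded type then $(a_n)$ is unbounded, so there is a strictly increasing sequence of indices $n_k$ with $a_{n_k}\to\infty$. The recurrence gives $q_{n_k}=a_{n_k}q_{n_k-1}+q_{n_k-2}\ge a_{n_k}q_{n_k-1}$, and combining this with $|\alpha-p_{n_k-1}/q_{n_k-1}|\le 1/(q_{n_k-1}q_{n_k})$ yields $|\alpha-p_{n_k-1}/q_{n_k-1}|\le 1/(a_{n_k}q_{n_k-1}^2)$. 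Hence for every $\varepsilon>0$, whenever $a_{n_k}\ge 1/\varepsilon$ the (pairwise distinct) convergents $p_{n_k-1}/q_{n_k-1}$ lie within $\varepsilon q^{-2}$ of $\alpha$; thus $\alpha$ is approximated at speed $\varepsilon q^{-2}$ for every $\varepsilon>0$, so $\alpha$ is not slow.

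For (a)$\Rightarrow$(c) suppose $a_n\le M$ for all $n$. Then $q_{n+1}=a_{n+1}q_n+q_{n-1}\le(M+1)q_n$, so $q_n+q_{n+1}\le(M+2)q_n$, and the lower bound $|\alpha-p_n/q_n|\ge 1/(q_n(q_n+q_{n+1}))$ gives $|\alpha-p_n/q_n|\ge 1/((M+2)q_n^2)$ for every convergent. To handle an arbitrary rational $a/b$ I would invoke the standard continued-fraction fact (Legendre's theorem, see \cite{Khinchin}) that $|\alpha-a/b|<1/(2b^2)$ forces $a/b$ to be a convergent of $\alpha$: then either $|\alpha-a/b|\ge 1/(2b^2)\ge 1/((M+2)b^2)$, or $a/b=p_n/q_n$ and the bound above applies. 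In either case $|\alpha-a/b|\ge 1/((M+2)b^2)$, so with $\varepsilon=1/(M+3)$ the number $\alpha$ is at distance $\varepsilon q^{-2}$ of the rationals, i.e.\ badly approximable.

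The one step requiring input beyond the elementary estimates recalled in the text is this last reduction from arbitrary rationals to convergents: the first-kind best-approximation property stated above does not by itself control rationals whose denominator lies strictly between two consecutive $q_n$, which is exactly why I bring in Legendre's theorem (equivalently, the second-kind best-approximation property, that the distance from $b\alpha$ to $\mathbb{Z}$ is no smaller than that from $q_n\alpha$ to $\mathbb{Z}$ whenever $q_n\le b<q_{n+1}$); both are in \cite{Khinchin}. Everything else is routine manipulation of the recurrence $q_{n+1}=a_{n+1}q_n+q_{n-1}$ and of the sandwich $1/(q_n(q_n+q_{n+1}))\le|\alpha-p_n/q_n|\le 1/(q_nq_{n+1})$; I expect the small-index cases $n=0,1$ to be disposed of separately by the trivial observation that $|\alpha-a_0|$ is bounded below in terms of $a_1\le M$.
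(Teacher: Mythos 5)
Your proof is correct, but it is organized differently from the paper's. The paper establishes two separate equivalences: slow $\Leftrightarrow$ badly approximable is obtained ``directly from the definitions'' by the same finite-exception trick as in Proposition \ref{equivalence_diophantine_speed} (if only finitely many rationals violate the distance bound $\varepsilon q^{-2}$, shrink $\varepsilon$ to exclude them), and bounded type $\Leftrightarrow$ slow is read off from the two-sided estimate $(3a_{n+1}q_n^2)^{-1} < |\alpha - p_n/q_n| < (a_{n+1}q_n^2)^{-1}$ for convergents. You instead close the cycle (a)$\Rightarrow$(c)$\Rightarrow$(b)$\Rightarrow$(a), which makes (c)$\Rightarrow$(b) trivial but forces you to prove the strongest implication, bounded type $\Rightarrow$ badly approximable, head-on; that is where you import Legendre's theorem to reduce arbitrary rationals to convergents. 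This is a legitimate and standard route, but note that under the bounded-type hypothesis Legendre is not actually needed: since $q_{n+1}\le (M+1)q_n$, consecutive denominators are comparable, so for $q_{n-1}< b\le q_n$ the first-kind best-approximation property already recalled in the text gives
\[ \left|\alpha - \frac{a}{b}\right| \;\geq\; \left|\alpha - \frac{p_n}{q_n}\right| \;\geq\; \frac{1}{(M+2)q_n^2} \;\geq\; \frac{1}{(M+2)(M+1)^2 b^2}, \]
a bound of the required shape with a worse constant. Conversely, the paper's ``not slow $\Rightarrow$ not bounded'' step silently performs the same kind of reduction of good rational approximations to convergents that you make explicit, so your write-up is arguably more candid about where the nontrivial continued-fraction input enters; each approach trades the finite-exception argument against a best-approximation lemma.
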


 \begin{proof}
  The equivalence of badly approximable numbers and slow numbers is direct from the definitions.
  We prove that numbers of bounded type are exactly slow numbers. 
  This also follows from Lemma 2.4 in \cite{Urbanski}.
  Let $\alpha = [a_0: a_1, a_2, ...]$ be an irrational number.
  For the convergents of the continued fraction expansion we have \[ \frac{1/3}{a_{n+1} q_n^2} < \left|\alpha - \frac{p_n}{q_n}\right| < \frac{1}{a_{n+1} q_n^2}.\]
  If $\alpha$ is not a slow number then it is approximated at speed $\psi(q) = \varepsilon q^{-2}$ for all $\varepsilon > 0$.
  This implies that for all $\varepsilon > 0$ there are infinitely many $a_{n+1} > (3 \varepsilon)^{-1}$, i.e. $\alpha$ is not of bounded type.
  On the other hand, if $\alpha$ is not a bounded type number then for all $\varepsilon>0$ there are infinitely many $n$ such that $a_{n+1}^{-1} q_n^{-2} \leq \varepsilon q_n^{-2}$, i.e. $\alpha$ is not a slow number.
 \end{proof}
 
 It is also known that irrational numbers with eventually periodic continued fraction expansion are exactly irrational algebraic numbers of degree 2, but it is still an open question whether algebraic numbers of degree $\geq 3$ are of bounded type or not. 
 
 As an example, for the number $(\sqrt{5} + 1)/2 = [1: 1,1,1,...]$ the sequence of denominators $q_n$ is the Fibonacci sequence and its asymptotic growth is $q_n \geq ((\sqrt{5} + 1)/2)^n$.
 For all irrational numbers the recurrence relation $q_{n+1} = a_n q_n + q_{n-1}$ implies that the asymptotic growth of the sequence $q_n$ is at least the same as the Fibonacci sequence, so the series $\sum  q_n^{-1}$ and $\sum (\log q_n)/ q_n$ always converge: 
 \[ \sum_{n=0}^{\infty} \frac{1}{q_n} < \sum_{n=0}^{\infty} \left( \frac{2}{\sqrt{5} + 1} \right)^n = \frac{3 + \sqrt{5}}{2} ,\]
 \[ \sum_{n=3}^{\infty} \frac{\log q_n}{q_n} < \log \left( \frac{\sqrt{5}+1}{2} \right) \sum_{n=3}^{\infty} n \left( \frac{2}{\sqrt{5} + 1} \right)^n = \frac{3 \sqrt{5} - 1}{2} \log \left( \frac{\sqrt{5}+1}{2} \right) .\]
  
 Another classification of irrational numbers comes from the theory of holomorphic functions and it is given by the  Brjuno condition. 
 An irrational number $\alpha$ satisfies the Brjuno condition if and only if \[ \mathcal{B}(\alpha) := \sum_{n=0}^{\infty}  \frac{\log q_{n+1}}{q_n} < \infty,\] where $q_n$ is the denominator of the $n$-th convergent of $\alpha$. 
 
 Recall that $\alpha$ is of class $D(\gamma)$ for some $\gamma > 0$ if and only if there exists $C>0$ such that the sequence of denominators satisfies $q_{n+1} < C q_n^{1+\gamma}$, in this case we have that \[\sum \frac{\log q_{n+1}}{q_n} < \log C \left( \sum \frac{1}{q_n} \right) + (1+\gamma) \left( \sum \frac{\log q_n}{q_n} \right) < \infty.\] 
 Therefore, all numbers of class $D(\infty)$ are Brjuno numbers. But there are numbers, such as $[ a_n = 10^{n!}]$, which satisfy the Brjuno condition but are not of class $D(\infty)$.
 
 \[ \ast \ast \ast \]

 The Gauss Map, $G(x) = (1/x) \mod{1}$, acts as a shift on the continued fraction of irrational numbers in the unit interval: $G([0 : a_1, a_2, ...]) = [0 : a_2, a_3, ...]$. 
 For an irrational number $\alpha \in [0,1]$ the \emph{upper} Lyapunov exponent is defined as \[ \lambda^{+}(\alpha) = \limsup_{n \rightarrow \infty} \frac{1}{n} \log \left|(G^n)' (\alpha)\right| = \limsup_{n \rightarrow \infty} \frac{1}{n} \sum_{j=0}^{n-1} \log \left|G'(G^j (\alpha))\right|,  \] and we have the equivalent expressions (See, for instance \cite{Pollicott}, Eq. 10.) \[ \lambda^{+} (\alpha) = - \limsup_{n \rightarrow \infty} \frac{1}{n} \log \left| \alpha - \frac{p_n}{q_n} \right| = \limsup_{n \rightarrow \infty} \frac{2}{n} \log q_n.\]
 
 Denote by $\mathcal{M}_G$ the set of Borel probability measures on the unit interval which are invariant under the Gauss Map and the associated Lyapunov exponent for measures $\mu \in \mathcal{M}_G$, as \[ \lambda_{\mu} = \int \log |G'| d\mu. \] 
 
 Let $\mathbf{a}_1 : [0,1] \rightarrow \mathbb{N}$ be the function given by $\mathbf{a}_1 ([0; a_1, a_2 ,...]) = a_1$.
 We define for irrational numbers $\alpha \in [0,1]$ and measures $\mu \in \mathcal{M}_G$ the quantities \[ \kappa^{+}(\alpha) = \limsup_{n \rightarrow \infty} \frac{1}{n} \sum_{j=0}^{n-1} \log |\mathbf{a}_1(G^j (\alpha))| \hspace*{1em} \text{  and } \hspace*{1em} \kappa_{\mu} = \int \log \mathbf{a}_1 d \mu .\]  
 We have that $\log \mathbf{a}_1(\alpha) \leq \log|G'(\alpha)| \leq 2 \log(\mathbf{a}_1(\alpha) + 1)$, and therefore $\lambda^{+}(\alpha) \geq \kappa^{+}(\alpha)$ for all irrational numbers and $\lambda_{\mu} \geq \kappa_{\mu}$ for all measures $\mu \in \mathcal{M}_G$. The number $\kappa^{+}(\alpha)$ is also known as the upper \emph{Khinchin exponent} of $\alpha$, for more details on the properties of both exponents see \cite{Fan} and \cite{Pollicott}.
 
 The number $\lambda^{+}(\alpha)/2$ is an upper bound for the exponential growth rate of the sequence $q_n$, and there is also a relation between Lyapunov exponents and the summability of the sequence $a_n$, and this relation allows us to prove the following.

 \begin{theorem}
 \label{Lyapunov_Diophantine}
  If an irrational number is of finite upper Lyapunov exponent then it is of class $D(+)$.
  Furthermore, if an irrational number is of bounded type then it is of finite upper Lyapunov exponent.
 \end{theorem}
 
\begin{proof}
  Let $\alpha$ be an irrational number such that $\lambda^{+}(\alpha) < \infty$ and suppose $\alpha$ is not of class $D(+)$. There exists $\gamma > 0$ and $K>0$ such that $a_{n+1} \geq K q_n^{\gamma} \geq K ((\sqrt{5}+1)/2)^{n\gamma} $ for infinitely many $n$.
  Consequently
  \begin{equation*}
   \begin{split}
     \lambda^{+}(\alpha) \geq \kappa^{+}(\alpha) &= \limsup_{n \rightarrow \infty} \frac{1}{n} \sum_{j=0}^{n-1} \log a_{j+1} \\ &\geq \log K + \frac{\gamma}{2} \log \left( \frac{\sqrt{5} + 1}{2} \right)  \limsup_{n \rightarrow \infty} (n+1) = \infty,
   \end{split}
  \end{equation*}
  which is a contradiction. 
  Therefore $\alpha$ is of class $D(+)$.
  On the other hand, if $\alpha$ is a bounded type number then \[ \lambda^{+} (\alpha) \leq \limsup_{n \rightarrow \infty} \frac{1}{n} \sum_{j=0}^{n-1} \log (a_{j+1} + 1) \leq \limsup_{n \rightarrow \infty} \frac{1}{n} \sum_{j=0}^{n-1} \log (M + 1) = \log (M+1),\] where $M$ is some bound on the coefficients $a_n$.
 \end{proof}
 
 Irrational numbers of finite upper Lyapunov exponent lie strictly in between numbers of bounded type and the class $D(+)$.
 An example is Euler's constant $e$, which is of class $D(+)$ and not of bounded type and satisfies $\lambda^{+}(e) = \infty$:
 The continued fraction expansion of $e$ is \[ [2: 1, 2, 1,1,4,1,1,6,1,1,8,1,1,10,1,1,12,1,...]. \] 
 See \cite{Cohn} for more details. 
 In order to check $e \in D(+)$ notice that for any given $\gamma > 0$ we have the inequalities $(2 q_n^{2+\gamma})^{-1} < (3 a_{n+1} q_n^{2})^{-1} < |e - p_n/q_n|$ for $n \geq 2/\gamma$. 
 Moreover, considering the subsequence $n= 3k+2$ we have,
 \begin{equation*}
  \begin{split}
   \lambda^{+}(e) &=  \limsup_{n \rightarrow \infty} \frac{2}{n} \log q_n \\
   &\geq  \limsup_{k \rightarrow \infty} \frac{2}{3k+2} \log \left( \prod_{j=1}^{k} 2(j+1) \right) \\ 
   &= \limsup_{k \rightarrow \infty} \frac{2 \log \left( 2^k (k+1)! \right)}{3k+2} = \infty.
  \end{split}
 \end{equation*}
 
 It is known that quadratic algebraic numbers are of bounded type and therefore of finite upper Lyapunov exponent; for algebraic numbers of degree $\geq 3$ we do not know whether their upper Lyapunov exponent is finite or not, and to determine the Lyapunov exponent of an algebraic number might be as hard as to find a closed form of its continued fraction.
 
 Consider now the usual Lyapunov exponent for irrational numbers, $\lambda(\alpha)$, which follows the same formulas as $\lambda^{+}(\alpha)$ only switching the $\limsup$ for a $\lim$, and consider the family of probability measures on the unit interval which are invariant under the Gauss map $\mathcal{M}_{G}$. 
 By Birkhoff's Ergodic Theorem we have for an ergodic measure $\mu \in \mathcal{M}_G$ that \[ \lambda_\mu = \int \log |G'| d \mu = \lim_{n \rightarrow \infty} \frac{1}{n} \sum_{j=0}^{n-1} \log |G' (G^j (\alpha))| = \lambda(\alpha) \hspace*{1em} \mu\text{-a.e.},\] where $\lambda(\alpha)$ is the usual Lyapunov exponent.
 The previous theorem and the Ergodic Decomposition Theorem imply the the following.
 (For more details on Ergodic Theory see, for example, \cite{Mane}.)
 
 \begin{n_prop}[\cite{Urbanski} Theorem 2.1]
 \label{Lyapunov_measure}
  If a measure $\mu \in \mathcal{M}_G$ is such that $\lambda_{\mu} < \infty$ then $\mu$-almost every irrational number is of class $D(+)$.
 \end{n_prop}

 For example, the only invariant measure with respect to the Gauss map and equivalent to Lebesgue measure is the called Gauss measure $\mu_g$, it is ergodic and is given by \[ \mu_g (E) =  \frac{1}{\log 2}\int_E \frac{dx}{1+x}.\] 
 The Lyapunov exponent of Gauss measure is $\lambda_{\mu_g} = \pi^2 / 6 \log 2$ and hence the class $D(+)$ has full Lebesgue measure.
 It is known that the subsets of $D(+)$ given by algebraic numbers and the class of bounded type numbers have null Lebesgue measure, so the set of irrational numbers of class $D(+)$ with $\lambda(\alpha) = \pi^2 / 6 \log 2$ which are neither algebraic nor of bounded type has full Lebesgue measure.

\end{document}